\newtheorem{thrm}{Theorem}[section]
\newtheorem{lem}[thrm]{Lemma}
\newtheorem{cor}[thrm]{Corollary}
\newtheorem{definition}[thrm]{Definition}
\newtheorem{ex}[thrm]{Example}
\numberwithin{equation}{section}
\author{C.A. Morales}
\address{Instituto de Matem\'atica\\
Universidade Federal do Rio de Janeiro\\
P. O. Box 68530 21945-970\\
Rio de Janeiro\\
Brazil.}
\email{morales@impa.br}
\keywords{Distal, Equicontinuous, Metric space}
\subjclass[2010]{Primary  54H20, Secondary 49J53}
\begin{document}

\title[Equicontinuity on semi-locally connected spaces]{Equicontinuity on semi-locally connected spaces}

\begin{abstract}
We show that a homeomorphism of a semi-locally connected compact metric space is
equicontinuous if and only if
the distance between the iterates
of a given point and a given subcontinuum (not containing that point)
is bounded away from zero.
This is false for general compact metric spaces.
Moreover, homeomorphisms for which the conclusion of this result holds
satisfy that the set of automorphic points contains those points where the space is not
semi-locally connected.
\end{abstract}

\maketitle

\section{Introduction}
\noindent
A homeomorphism $f:X\to X$ of a compact metric space $X$ is
{\em distal} if
$$
\inf_{n\in\mathbb{Z}}d(f^n(x),f^n(y))>0
$$
for all distinct points $x,y\in X$.
The distal homeomorphisms were introduced by Hilbert as a generalization of rigid group of
motions \cite{a}, \cite{e}.

It is natural to generalize this definition by
replacing $y$ above by a compact subset $K$ (not containing $x$ of course);
and $d(f^n(x),f^n(y))$ by $dist(f^n(x),f^n(K))$ where
$$
dist(p,B)=\inf\{d(p,b):b)\in B\}, \quad\quad\forall p\in X, B\subset X.
$$
However, the resulting generalization turns out to be equivalent to equicontinuity (by Lemma \ref{lem1} below).
Recall that $f$ is {\em equicontinuous} if for every $\epsilon>0$ there is $\delta>0$ such that
$d(f^n(x),f^n(y))\leq\epsilon$ whenever $x,y\in X$ satisfy $d(x,y)\leq\delta$.

But the question arises what if we replace $y$ by another type of compact subsets like, for instance, a subcontinuum $C$ not containing $x$?
Recall that a {\em subcontinuum} is a nonempty compact connected subset of $X$.
The continuum theory \cite{n} has been considered elsewhere in the study of dynamical systems
\cite{acp}, \cite{k}.
This question suggests the study homeomorphisms $f: X\to X$ with the following property:

\begin{enumerate}
\item[(CW)] $\inf_{n\in \mathbb{Z}}dist(f^n(x),f^n(C))>0$ for every $x\in X$ and every
subcontinuum $C$ of $X$ with $x\notin C$.
\end{enumerate}

In our first result we will give topological conditions characterizing equicontinuity in terms of this property.
For this we shall use
the following definition by Whyburn \cite{w} (see also Jones \cite{j}).

\begin{definition}
We say that a metric space $X$ is {\em semi-locally connected at $p\in X$}
if for every open neighborhood $U$ of $p$ there is an open neighborhood
$V\subset U$ of $p$ such that $X\setminus U$ is contained
in the union of finitely many connected components of $X\setminus V$.
We denote by $X^{slc}$ the set of points at which $X$ is semi-locally connected.
We say that $X$ is {\em semi-locally connected} if $X^{slc}=X$.
\end{definition}

The class of semi-locally connected spaces is broad enough to include properly the locally connected ones
(like manifolds or more structured spaces).
In these spaces we shall obtain the following equivalence.

\begin{thrm}
\label{thB}
A homeomorphism of a semi-locally connected compact metric space is equicontinuous if and only if
it satisfies (CW).
\end{thrm}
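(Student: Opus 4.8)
The plan is to route both implications through Lemma~\ref{lem1}, which (as recalled in the introduction) asserts that $f$ is equicontinuous if and only if
$$
\inf_{n\in\mathbb{Z}} dist(f^n(x),f^n(K))>0
$$
for every $x\in X$ and every nonempty compact $K\subseteq X$ with $x\notin K$; call this property (CW$'$). Since every subcontinuum is a nonempty compact set, (CW$'$) implies (CW) at once, so the ``only if'' direction of the theorem is immediate: if $f$ is equicontinuous then (CW$'$) holds by Lemma~\ref{lem1}, hence so does (CW). This half uses neither compactness of $X$ nor semi-local connectedness. The substance is the ``if'' direction, and the key step is to prove that, when $X$ is semi-locally connected, (CW) conversely implies (CW$'$); the theorem then follows from Lemma~\ref{lem1} a second time.

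To carry out that key step, fix $x\in X$ and a nonempty compact $K\subseteq X$ with $x\notin K$, and set $U=X\setminus K$, an open neighborhood of $x$. Since $X$ is semi-locally connected at $x$, choose an open neighborhood $V\subseteq U$ of $x$ such that $X\setminus U$ is contained in the union of finitely many connected components $D_1,\dots,D_m$ of $X\setminus V$; note $X\setminus U=K$, so $K\subseteq D_1\cup\dots\cup D_m$. Here I would record that each $D_i$ is a subcontinuum: $X\setminus V$ is closed in $X$, hence compact, so its connected components are nonempty compact connected sets; and $x\in V$ forces $x\notin D_i$. Applying (CW) to $x$ and to each $D_i$ gives $\eta_i:=\inf_{n\in\mathbb{Z}} dist(f^n(x),f^n(D_i))>0$, and --- this is the one place finiteness is used --- $\eta:=\min_{1\le i\le m}\eta_i>0$.

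It remains to see that this $\eta$ bounds $K$ away from the orbit of $x$. Given $n\in\mathbb{Z}$ and $z\in K$, pick $i$ with $z\in D_i$; then $f^n(z)\in f^n(D_i)$, so $d(f^n(x),f^n(z))\ge dist(f^n(x),f^n(D_i))\ge\eta$. Taking the infimum over $z\in K$ yields $dist(f^n(x),f^n(K))\ge\eta$ for every $n$, whence $\inf_{n\in\mathbb{Z}} dist(f^n(x),f^n(K))\ge\eta>0$. This is exactly (CW$'$), and Lemma~\ref{lem1} then finishes the proof.

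The only genuine subtlety I anticipate is identifying the right intermediate statement: the bridge from (CW) to equicontinuity is not direct but passes through the ``arbitrary compact set'' version (CW$'$), and semi-local connectedness is precisely the hypothesis guaranteeing that the complement of a neighborhood of a point can be covered by \emph{finitely many} subcontinua missing that point --- the finiteness being exactly what lets the individual bounds $\eta_i$ be amalgamated into a single positive $\eta$. Beyond that, the argument is bookkeeping together with the elementary fact that a connected component of a compact space is a subcontinuum.
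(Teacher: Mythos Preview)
Your proof is correct and follows essentially the same route as the paper: both directions are routed through Lemma~\ref{lem1}, and the key step (CW) $\Rightarrow$ (CW$'$) is exactly the content of the paper's Lemma~\ref{cw}, proved via the same idea of covering $K$ by finitely many subcontinua coming from semi-local connectedness. Your execution of that step is in fact a bit cleaner than the paper's --- you take $U=X\setminus K$ directly and argue constructively, whereas the paper argues by contradiction with sequences and an extra preliminary compactness cover of $K$ --- but the underlying mechanism is identical.
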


In particular,
since there are distal homeomorphisms on the two-disk which are not
equicontinuous (e.g. Chapter 5 in \cite{a}),
and the two-disk is locally connected,
we obtain the following corollary.

\begin{cor}
\label{c1}
There are distal homeomorphisms which do not satisfy (CW).
\end{cor}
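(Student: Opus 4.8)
The plan is to read off the corollary from Theorem~\ref{thB} by feeding it a classical example living on a locally connected space. Concretely, let $\mathbb{D}=\{x\in\mathbb{R}^2:\|x\|\le 1\}$ denote the closed two-disk with the Euclidean metric. It is a compact metric space, and, being locally connected, it is semi-locally connected (this is the inclusion noted right after the definition of semi-local connectedness); hence $\mathbb{D}$ lies within the scope of Theorem~\ref{thB}.

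Next I would invoke the existence, already recalled above, of a homeomorphism $f:\mathbb{D}\to\mathbb{D}$ which is distal but not equicontinuous (Chapter~5 of~\cite{a}). Since $\mathbb{D}$ is a semi-locally connected compact metric space, Theorem~\ref{thB} tells us that $f$ satisfies (CW) if and only if $f$ is equicontinuous. As $f$ is not equicontinuous, $f$ does not satisfy (CW). Thus $f$ is a distal homeomorphism that fails (CW), which is precisely the assertion of the corollary; I would phrase the write-up exactly in this contrapositive form.

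I do not anticipate a real obstacle, since the corollary is a formal consequence of Theorem~\ref{thB}. The only points deserving care are bookkeeping ones: confirming that the quoted example is genuinely a \emph{homeomorphism} of the disk (not a flow or a more general group action) that is simultaneously distal and non-equicontinuous, and confirming that ``locally connected'' really does suffice to apply Theorem~\ref{thB}. Both are standard — the latter being exactly the stated inclusion of the locally connected spaces into the semi-locally connected ones — so the argument is essentially immediate once the example is in hand.
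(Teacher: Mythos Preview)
Your proposal is correct and matches the paper's own argument essentially verbatim: invoke the distal, non-equicontinuous homeomorphism of the two-disk from Chapter~5 of \cite{a}, note that the disk is locally connected and hence semi-locally connected, and apply Theorem~\ref{thB} in contrapositive form. There is nothing to add.
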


Notice that (CW) is also equivalent to equicontinuity on certain nonsemi-locally spaces 
like the totally disconnected ones (see Corollary 1.9 in \cite{agw}).
But in general (CW) is not equivalent to equicontinuity the following counterexample.

\begin{ex}
\label{ex1}
Define $X=\{re^{i\theta}\in \mathbb{R}^2:0\leq \theta\leq 2\pi,r\in C\}$
where $C$ is the ternary Cantor set in $[1,2]$.
Then, the map $f:X\to X$ defined by
$f(re^{i\theta})=re^{i(\theta+2\pi r)}$ is a homeomorphism of $X$.
This homeomorphism is an irrational rotation on the circle with radius $r$ for irrational $r$,
while it is a periodic rotation for rational $r$.
Using this we can see that $f$ is not equicontinuous.
Since every subcontinuum of $X$ is contained in one of the
circles $\{re^{i\theta}:0\leq\theta\leq 2\pi\}$ ($r\in C$) in which the action is an isometry,
we obtain that $f$ satisfies (CW).
\end{ex}

This counterexample motivates the search of similarities between (CW) and
the equicontinuous property.
For instance, since every equicontinuous homeomorphism is uniformly rigid \cite{gm},
it is natural to ask if every homeomorphism satisfying (CW) is
uniformly rigid (or at least rigid).
Another question comes from the following definition by
Veech \cite{v} (see also \cite{agn}):

\begin{definition}
If $f: X\to X$ is a homeomorphism of a compact metric space $X$,
a point $x\in X$ is called {\em almost automorphic}
if for every sequence $n_i\in \mathbb{Z}$ with $f^{n_i}(x)\to y$ for some $y\in X$
it holds that $f^{-n_i}(y)\to x$. We denote by $A(f)$ the set of almost automorphic points of $f$.
\end{definition}

Theorem 3 in \cite{agn} implies that every equicontinuous homeomorphism $f: X\to X$ of a compact metric space $X$ satisfies $A(f)=X$.
Is this property true (or at least $A(f)\neq\emptyset$) for every homeomorphisms $f$ satisying (CW)?
A positive answer is supported by Example \ref{ex1}
in which $A(f)=X$ (as explained in Example 9 in \cite{agn}).
Another positive answer is supported by our second result.

\begin{thrm}
\label{thD}
If $f: X\to X$ is a homeomorphism satisfying (CW) of a compact metric space $X$,
then $X^{slc}\subset A(f)$.
\end{thrm}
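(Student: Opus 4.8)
The plan is to argue by contradiction. Fix $p\in X^{slc}$ and suppose $p\notin A(f)$. Then there is a sequence $n_i\in\mathbb{Z}$ with $f^{n_i}(p)\to y$ for some $y\in X$, but $f^{-n_i}(y)\not\to p$. Negating convergence and then using compactness of $X$, we may pass to a subsequence (which we do not relabel) along which $d(f^{-n_i}(y),p)\geq\eta$ for some $\eta>0$ and $f^{-n_i}(y)\to z$ for some $z\in X$; in particular $z\neq p$. Write $x_i=f^{-n_i}(y)$, so that $x_i\to z$, $f^{n_i}(x_i)=y$ for all $i$, and $f^{n_i}(p)\to y$.

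Next I would bring in semi-local connectedness at $p$. Since $X$ is metric and $z\neq p$, choose an open ball $U$ about $p$ small enough that $z\notin\overline{U}$. By hypothesis there is an open neighborhood $V\subset U$ of $p$ such that $X\setminus U$ is contained in the union of finitely many connected components $C_1,\dots,C_k$ of $X\setminus V$. This is where the topology does the real work: because $V$ is open, $X\setminus V$ is a closed, hence compact, subset of $X$, so its connected components are closed in $X$; therefore each $C_j$ is a nonempty compact connected set, i.e. a \emph{subcontinuum} of $X$. Moreover $p\in V$ forces $p\notin X\setminus V\supset C_j$, so $p\notin C_j$ for every $j$. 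Hence (CW) applies to each $C_j$ and yields $a:=\min_{1\le j\le k}\inf_{n\in\mathbb{Z}}dist(f^n(p),f^n(C_j))>0$.

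Finally I would trap the backward iterates in a single $C_j$. Since $x_i\to z$ and $z$ lies in the open set $X\setminus\overline{U}$, we have $x_i\in X\setminus U\subset C_1\cup\dots\cup C_k$ for all large $i$; thus $x_i\in C_{l_i}$ for some $l_i\in\{1,\dots,k\}$, and by the pigeonhole principle we may pass to a further subsequence along which $l_i$ equals a fixed index $l$. For those $i$ we then have $y=f^{n_i}(x_i)\in f^{n_i}(C_l)$, whence
$$
dist\bigl(f^{n_i}(p),f^{n_i}(C_l)\bigr)\leq d\bigl(f^{n_i}(p),y\bigr)\longrightarrow 0,
$$
which contradicts $dist(f^{n_i}(p),f^{n_i}(C_l))\geq a>0$. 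This contradiction shows $p\in A(f)$, and since $p\in X^{slc}$ was arbitrary, $X^{slc}\subset A(f)$.

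As for difficulty, the argument is short once the right objects are in place, so there is no computational obstacle. The two points to get right are: first, recognizing that the components $C_j$ produced by semi-local connectedness are automatically subcontinua disjoint from $p$, which is precisely what makes (CW) applicable to them; and second, the successive passages to subsequences — one to extract the limit $z\neq p$ of the backward iterates, one to force all but finitely many of those iterates into a single component $C_l$ — together with the remark that (CW) bounds the infimum over the whole orbit $\mathbb{Z}$, so these passages to subsequences cost nothing.
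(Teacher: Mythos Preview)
Your proof is correct. The core mechanism---use semi-local connectedness at $p$ to trap the relevant points in one of finitely many subcontinua $C_j\subset X\setminus V$ disjoint from $p$, then invoke (CW) for that $C_j$---is exactly the engine of the paper's Lemma~\ref{cw}. The paper organizes the argument differently: it first proves in Lemma~\ref{cw} the general statement that for $p\in X^{slc}$ and any compact $K\not\ni p$ one has $\inf_{n\in\mathbb{Z}}dist(f^n(p),f^n(K))>0$; it then converts this via Lemma~\ref{l2} into an $\epsilon$--$\delta$ property at $p$, from which almost automorphy is read off directly rather than by contradiction. Your version inlines these steps for the particular sequence $x_i=f^{-n_i}(y)$, and by first extracting the limit $z\neq p$ you sidestep the finite-open-cover reduction that Lemma~\ref{cw} uses to pass from a general compact $K$ to a single neighborhood $U$. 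The modular route has the payoff that Lemma~\ref{cw} is reusable (it also drives the proof of Theorem~\ref{thB}); your direct route is a bit shorter for this theorem in isolation.
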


Consequently,

\begin{cor}
Let $X$ be a compact metric space.
If $X^{slc}\neq\emptyset$, then $A(f)\neq\emptyset$ for
every homeomorphism $f: X\to X$ satisfying (CW).
\end{cor}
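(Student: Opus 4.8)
The plan is to obtain this corollary as an immediate consequence of Theorem \ref{thD}. First I would observe that, under the stated hypotheses, $f$ is a homeomorphism of a compact metric space $X$ satisfying (CW)---which is precisely the hypothesis of Theorem \ref{thD}---so that theorem yields the inclusion $X^{slc}\subset A(f)$. Next, since by assumption $X^{slc}\neq\emptyset$, I would pick any point $x\in X^{slc}$ and conclude from the inclusion that $x\in A(f)$. Therefore $A(f)\neq\emptyset$, which is the assertion.

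I do not expect any genuine obstacle here: all of the substantive work is carried out in Theorem \ref{thD}, and this corollary merely extracts its nonemptiness consequence. The only point worth keeping in mind is that Theorem \ref{thD} does not require $X$ itself to be semi-locally connected, but only asserts something about the (possibly proper) subset $X^{slc}$; hence the weak hypothesis $X^{slc}\neq\emptyset$ already suffices to force $A(f)\neq\emptyset$. This contrasts with Theorem \ref{thB} and Corollary \ref{c1}, where the global semi-local connectedness of the space (or its failure, as in Example \ref{ex1}) is what drives the argument.
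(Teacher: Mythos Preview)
Your proposal is correct and matches the paper's approach: the corollary is stated immediately after Theorem~\ref{thD} with no separate proof, being an obvious consequence of the inclusion $X^{slc}\subset A(f)$ together with $X^{slc}\neq\emptyset$. Your extra remarks about the contrast with Theorem~\ref{thB} are accurate but not needed for the argument.
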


On the other hand, $X^{lsc}= A(f)$ is false in general for homeomorphisms $f$ satisfying (CW).
Indeed, every equicontinuous homeomorphism satisfies (CW) with $A(f)=X$ thus
it suffices to take the identity of the space described in Example \ref{ex1}.

Theorem \ref{thD} has some other interesting consequences.
For instance,
recall that a homeomorphism $f: X\to X$ is {\em transitive} if the there is $x\in X$ such that
the orbit $\{f^n(x):n\in\mathbb{Z}\}$ of $x$ is dense in $X$.

\begin{cor}
\label{c0}
Let $X$ be a compact metric space. If $X^{slc}\neq\emptyset$,
then a transitive homeomorphism of $X$ satisfies (CW) if and only if it is equicontinuous.
\end{cor}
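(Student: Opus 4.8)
The plan is to derive both implications from the results already in hand, using Theorem \ref{thD} and Theorem \ref{thB} as the main engines. The ``only if'' direction is the substantive one: we assume $f$ is a transitive homeomorphism of $X$ satisfying (CW), and we must show $f$ is equicontinuous. The ``if'' direction is immediate, since by Lemma \ref{lem1} (the equivalence of equicontinuity with the subset-version of the distal property, hence a fortiori with the subcontinuum version) every equicontinuous homeomorphism satisfies (CW); transitivity is not even needed here.

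For the nontrivial direction, the first step is to observe that the almost automorphic set $A(f)$ is a set whose complement is meager, or at least that $A(f)$ is dense, once it is nonempty and $f$ is transitive. Concretely: since $X^{slc}\neq\emptyset$ by hypothesis and $f$ satisfies (CW), Theorem \ref{thD} gives $X^{slc}\subset A(f)$, so $A(f)\neq\emptyset$; pick $x_0\in A(f)$. Now I would use transitivity to produce a dense orbit $\{f^n(z):n\in\mathbb Z\}$, and then argue that $A(f)$ is invariant under $f$ (if $x$ is almost automorphic then so is $f(x)$, by a routine shift of the index sequence $n_i\mapsto n_i-1$), hence $A(f)$ contains the full orbit of any of its points. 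The key point is then to show that a transitive homeomorphism with a nonempty almost automorphic set actually has the transitive point itself almost automorphic, or better, that $A(f)=X$; from there equicontinuity follows.

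The cleanest route I expect to work: show that under transitivity plus (CW), the system is \emph{minimal and distal}, or directly that it is equicontinuous, by exploiting that an almost automorphic point in a transitive system forces strong recurrence. Indeed, a transitive almost automorphic system (a transitive system possessing an almost automorphic point) is known to be an almost one-to-one extension of its maximal equicontinuous factor; combined with (CW), which rules out proximal pairs in the strong ``point versus subcontinuum'' sense, one should be able to collapse that extension to an isomorphism, yielding equicontinuity. Alternatively, and perhaps more in the spirit of this paper, one shows $X$ is semi-locally connected at \emph{every} point: the set $X^{slc}$ is easily seen to be $f$-invariant (semi-local connectedness at $p$ is a topological property preserved by the homeomorphism $f$), it contains a point, hence contains a dense orbit, and — since $X\setminus X^{slc}$ is closed — if $X^{slc}$ is dense one needs a further argument that it is in fact all of $X$. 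This last gap is the main obstacle: density of $X^{slc}$ does not by itself give $X^{slc}=X$.

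To close that obstacle I would avoid trying to prove $X=X^{slc}$ and instead argue directly. Having fixed an almost automorphic point $x_0$ whose orbit is dense (we may assume the transitive point is almost automorphic after the invariance remarks above, or replace the transitive point argument by noting $A(f)$ dense $G_\delta$ in transitive systems), I would show equicontinuity via the characterization in Theorem \ref{thB} applied not to $X$ but by a limiting argument: suppose $f$ is not equicontinuous; then there exist $\epsilon>0$ and sequences $x_k,y_k\to w$ and $n_k$ with $d(f^{n_k}(x_k),f^{n_k}(y_k))>\epsilon$. Using transitivity, move $w$ close to $x_0$ along the orbit; using almost automorphy of $x_0$, the return dynamics near $x_0$ is rigid, contradicting the uniform separation. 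The technical heart is making this local-rigidity-at-$x_0$ argument uniform, and that is where I expect to spend the most effort; (CW) enters precisely to prevent the separated pairs from ``escaping along a subcontinuum,'' which is what goes wrong in Example \ref{ex1}-type obstructions. Modulo that, both implications are then in place and the corollary follows. $\qed$
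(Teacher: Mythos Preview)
The paper's proof is three lines: observe that (CW) implies $f$ is distal (since singletons are subcontinua), apply Theorem~\ref{thD} together with $X^{slc}\neq\emptyset$ to get $A(f)\neq\emptyset$, and then invoke Corollary~14 of \cite{agn}, which says that a transitive distal system with an almost automorphic point is equicontinuous. The converse is Lemma~\ref{l1}.

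Your proposal has the right opening move (Theorem~\ref{thD} gives $A(f)\neq\emptyset$) and the right converse, but then it drifts. Two concrete issues. First, you never isolate the implication ``(CW) $\Rightarrow$ distal'' as a usable step; you mention that (CW) ``rules out proximal pairs in the strong point-versus-subcontinuum sense,'' but what is needed is simply that it rules out proximal pairs, full stop, and from distality plus transitivity one gets minimality for free. Second, and more seriously, once you have a minimal distal system with an almost automorphic point, the conclusion is a known theorem (Corollary~14 of \cite{agn}); your Route~1 sketch---almost one-to-one extension of the maximal equicontinuous factor, then collapse using distality---is essentially a reproof of that result, and would work if completed, but you do not complete it. Route~2 (promoting density of $X^{slc}$ to $X^{slc}=X$) fails for the reason you yourself flag: $X^{slc}$ need not be closed. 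The final ``direct'' paragraph is not an argument: it never specifies how (CW) or almost automorphy of $x_0$ actually blocks the separating sequence, and the claim that ``we may assume the transitive point is almost automorphic'' is unjustified without first establishing minimality.

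In short: your plan is salvageable only along Route~1, and the cleanest salvage is exactly the paper's---state (CW) $\Rightarrow$ distal, then cite \cite{agn}.
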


\begin{proof}
Apply Theorem \ref{thD}, Corollary 14 in \cite{agn} and
the fact that every homeomorphism satisfying (CW) is distal.
\end{proof}

A good question is if this corollary is false without the assumption $X^{slc}\neq\emptyset$.

By Theorem \ref{thD} we can also conclude that a compact metric space exhibiting
homeomorphisms satisfying (CW) but without almost automorphic points
cannot be semi-locally connected at any point.
However, as already mentioned, we don't know if there are such kind of
pathological homeomorphism.

The author would like to thank Professor D. Obata for helpful conversations related to Lemma \ref{lem1}.

\section{Proof of theorems \ref{thB} and \ref{thD}}

\noindent
We start with the following lemma.

\begin{lem}
\label{l1}
If $f:X\to X$ is an equicontinuous homeomorphism
of a compact metric space $X$,
then $\inf_{n\in\mathbb{Z}} dist(f^n(x),f^n(K))>0$
for every $x\in X$ and every nonempty compact subset 
$K$ with $x\notin K$.
\end{lem}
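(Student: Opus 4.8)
The plan is to produce an explicit uniform lower bound for $dist(f^n(x),f^n(K))$ valid for all $n\in\mathbb{Z}$, rather than to argue through a limit; phrasing the same idea as a proof by contradiction would be equally natural. The only input, besides compactness, is that an equicontinuous homeomorphism of a compact metric space has \emph{all} its iterates $\{f^n:n\in\mathbb{Z}\}$ uniformly equicontinuous, and in particular its negative iterates.

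First I would record that $\epsilon_0:=dist(x,K)>0$: since $K$ is a nonempty compact set with $x\notin K$, the continuous function $k\mapsto d(x,k)$ attains a strictly positive minimum on $K$. Next I would invoke equicontinuity to get $\delta>0$ such that $d(p,q)\le\delta$ implies $d(f^{-n}(p),f^{-n}(q))\le\epsilon_0/2$ for every $n\in\mathbb{Z}$ and every $p,q\in X$. I then claim $dist(f^n(x),f^n(K))\ge\delta$ for all $n$. Indeed, if not, then for some $n$ there is $k\in K$ with $d(f^n(x),f^n(k))<\delta$; applying the previous implication with $p=f^n(x)$ and $q=f^n(k)$ gives $d(x,k)=d(f^{-n}(f^n(x)),f^{-n}(f^n(k)))\le\epsilon_0/2$, contradicting $d(x,k)\ge dist(x,K)=\epsilon_0$. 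Taking the infimum over $n\in\mathbb{Z}$ then yields $\inf_{n\in\mathbb{Z}}dist(f^n(x),f^n(K))\ge\delta>0$, which is the assertion.

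The argument is short, and the one point deserving care — the main (minor) obstacle — is the passage from equicontinuity of $f$ to equicontinuity of its negative iterates. Under the convention that equicontinuity quantifies over all $n\in\mathbb{Z}$ this is immediate; otherwise one uses the standard fact that on a compact metric space an equicontinuous homeomorphism has equicontinuous inverse. Everything else is a routine use of the triangle inequality and the definition of $dist$.
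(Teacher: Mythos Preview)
Your proof is correct and rests on the same idea as the paper's: use equicontinuity (applied to negative iterates, which is immediate under the paper's $n\in\mathbb{Z}$ convention) to conclude that if $f^n(x)$ and $f^n(k)$ were ever close, then $x$ and $k$ would be close, contradicting $dist(x,K)>0$. The paper phrases this as a proof by contradiction via sequences $n_k$, $b_k\to b\in K$ and works with $\epsilon=d(x,b)/3$, whereas you give the cleaner direct version with an explicit uniform lower bound $\delta$ obtained from $\epsilon_0=dist(x,K)$; your route avoids the (unnecessary) subsequence extraction but is otherwise the same argument.
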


\begin{proof}
Suppose by contradiction that $f$
does not satisfy the required property.
Then, there exist $x\in X$ and a nonempty compact subset 
$K$ with $x\notin K$ such that
$
\inf_{n\in\mathbb{Z}} dist(f^n(x),f^n(K))=0.
$
From this we obtain sequences $b_k\in K$
and $n_k\in\mathbb{Z}$ such that
$$
\lim_{k\to\infty}d(f^{n_k}(x),f^{n_k}(b_k))=0.
$$
Since $K$ is compact, we can assume
that $b_k\to b$ for some $b\in K$.
Since $x\notin K$, we have $\epsilon=\frac{d(x,b)}{3}>0$.
For this $\epsilon$ we
let $\delta$ be given by the equicontinuity of $f$.

Since $b_k\to b$, there is $k\in\mathbb{N}$ such that
$d(b_k,b)\leq\epsilon$ and $d(x_k,y_k)\leq\delta$
where $x_k=f^{n_k}(x)$ and $y_k=f^{n_k}(b_k)$.
It follows that $d(f^{-n_k}(x_k),f^{-n_k}(y_k))\leq\epsilon$ by the equicontinuity property.
Since $f^{-n_k}(x_k)=x$ and $f^{-n_k}(y_k)=b_k$, we obtain $d(x,b_k)\leq \epsilon$ and so
$
d(x,b)\leq d(x,b_k)+d(b_k,b)
\leq 2\epsilon= \frac{2}{3}d(x,b)
$
which is absurd. This ends the proof.
\end{proof}

We also need the following results.

\begin{lem}
\label{l2}
Let $f: X\to X$ be a homeomorphism of a compact metric space $X$. If $p\in X$ satisfies
$\inf_{n\in\mathbb{Z}} dist(f^n(p),f^n(K))>0$
for every nonempty compact subset 
$K$ with $p\notin K$, then for every $\epsilon>0$ there is
$\delta>0$ such that
$$
y\in X, n\in\mathbb{Z} \mbox{ and }d(f^n(p),f^n(y))\leq\delta\quad \Longrightarrow \quad d(p,y)\leq\epsilon.
$$
\end{lem}

\begin{proof}
Fix $\epsilon>0$.
Denote by $B(\cdot,\cdot)$ and $B[\cdot,\cdot]$ the open and closed ball operations in $X$ respectively.
We can assume $X\setminus B(p,\epsilon)\neq\emptyset$
for, otherwise, the conclusion is trivial.
Since
$K=X\setminus B(p,\epsilon)$ is compact not containing $p$,
there is $\delta$ satisfying
$$
0<\delta<\inf_{n\in\mathbb{Z}}dist(f^n(p),f^n(X\setminus B(p,\epsilon))).
$$
It follows that
$B[f^n(p),\delta]\subset f^n(B(p,\epsilon))$ for every $n\in\mathbb{Z}$.
Then, if $y\in X$ and $n\in\mathbb{Z}$ satisfy
$d(f^n(p)f^n(y))\leq\delta$,
$f^n(y)\in B[f^n(p),\delta]$ so $f^n(y)\in f^n(B(p,\epsilon)))$
thus $y\in B(p,\epsilon)$ yielding $d(p,y)\leq\epsilon$.
\end{proof}

Now some standard terminology.
A subset $F\subset \mathbb{Z}$ is called {\em syndetic}
if there is $l>0$ such that
$[n,n+l]\cap F\neq\emptyset$ for every $n\in \mathbb{Z}$.
Clearly $F$ is syndetic if and only if $-F=\{-n:n\in F\}$ is.
If $f:X\to X$ is a homeomorphism,
we say that $x\in X$ is {\em almost periodic}
if for every neighborhood $U$ of $x$ there is a syndetic
$F\subset \mathbb{Z}$ such that $f^n(x)\in U$ for every $n\in F$.
We say that {\em $f$ is pointwise almost periodic} if every point is almost periodic.

On the other hand, $x\in X$ is {\em locally almost periodic}
if for every neighborhood $U$ of $x$ there are a neighborhood
$V\subset U$ of $x$ and a syndetic $F\subset \mathbb{Z}$
such that $f^n(V)\subset U$ for every $n\in F$.
We say that {\em $f$ is locally almost periodic} if
every point $x$ is locally almost periodic.

The next lemma gives a sufficient condition for an almost periodic point to be locally almost periodic.

\begin{lem}
\label{l4}
Let $p$ be an almost periodic point of a homeomorphism $f: X\to X$
of a compact metric space $X$.
If for every $\epsilon>0$ there is
$\delta>0$ such that
$$
y\in X, n\in\mathbb{Z} \mbox{ and }d(f^n(p),f^n(y))\leq\delta\quad \Longrightarrow \quad d(p,y)\leq\epsilon,
$$
then $p$ is locally almost periodic.
\end{lem}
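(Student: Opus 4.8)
The plan is to fix a neighborhood $U$ of $p$ and produce the required neighborhood $V$ and syndetic set $F$. First I would shrink $U$ to a closed ball: choose $\epsilon>0$ with $B[p,2\epsilon]\subset U$. Apply the hypothesis to this $\epsilon$ (or rather to $2\epsilon$) to obtain $\delta>0$ such that $d(f^n(p),f^n(y))\leq\delta$ forces $d(p,y)\leq 2\epsilon$, for all $y\in X$ and $n\in\mathbb{Z}$. Since $p$ is almost periodic, applied to the neighborhood $B(p,\delta/2)$ of $p$ there is a syndetic set $F\subset\mathbb{Z}$ with $f^n(p)\in B(p,\delta/2)$ for every $n\in F$. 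I will take this same $F$, together with $V=B(p,\eta)$ for a suitably small $\eta>0$ to be pinned down using uniform continuity.

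The key step is the choice of $V$. Here I would use that $f$ is a homeomorphism of a compact metric space, hence each $f^{-n}$ is uniformly continuous — but the subtlety is that we need a modulus that does \emph{not} depend on $n$. The trick is to route everything through the hypothesis rather than through $f^{-n}$ directly. Concretely: by uniform continuity of $f$ at the single point $p$ is not enough; instead, pick $\eta>0$ so small that $d(x,p)\leq\eta$ implies $d(f(x),f(p))\leq \delta/2$ — wait, that still iterates badly. The correct route: since the identity map is uniformly continuous, simply choose $\eta=\delta/2$ and set $V=B(p,\delta/2)$. Then for $y\in V$ and $n\in F$,
$$
d(f^n(y),f^n(p))\leq d(f^n(y),p) ?
$$
no — this does not work either, because $f^n(y)$ need not be near $f^n(p)$ just because $y$ is near $p$. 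So the honest approach must be: for $y\in V$ and $n\in F$, we have $f^n(p)\in B(p,\delta/2)$, and we want $d(f^n(p),f^n(y))\leq\delta$; by the triangle inequality it suffices that $d(f^n(y),p)\leq\delta/2$. This is exactly the statement that $f^n(V)\subset B(p,\delta/2)$ for $n\in F$, which is what we are trying to prove — circular.

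The resolution, and the main obstacle, is that one should \emph{not} expect $V$ to be chosen by a naive continuity argument at all; instead I would argue contrapositively using compactness. Suppose no such $V$ exists. Then there are points $y_k\to p$ and indices $n_k\in F$ with $f^{n_k}(y_k)\notin U\supset B[p,2\epsilon]$, so $d(f^{n_k}(y_k),p)>2\epsilon$. Since $n_k\in F$ we have $d(f^{n_k}(p),p)<\delta/2$, hence $d(f^{n_k}(y_k),f^{n_k}(p))>2\epsilon-\delta/2$; choosing $\delta$ even smaller at the outset, say $\delta<\epsilon$, this is $>\epsilon$. Meanwhile, since $y_k\to p$ and $f^{n_k}(p)$ lies in the compact set $B[p,\delta/2]$, we may pass to a subsequence with $f^{n_k}(p)\to q$ and $f^{n_k}(y_k)\to q'$ with $d(q,q')\geq\epsilon$, yet $d(q,p)\leq\delta/2$. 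Now apply the hypothesis: take $\epsilon'=\epsilon/2$, get $\delta'>0$; for large $k$, $d(f^{-n_k}(f^{n_k}(y_k)), f^{-n_k}(f^{n_k}(p)))=d(y_k,p)\leq\delta'$, hence — by the hypothesis applied with the pair $(f^{n_k}(p),f^{n_k}(y_k))$ and index $-n_k$, i.e. $d(f^{-n_k}(a),f^{-n_k}(b))\leq\delta'$ with $a=f^{n_k}(p)$, $b=f^{n_k}(y_k)$ — we would get $d(a,b)\leq\epsilon'$, contradicting $d(f^{n_k}(p),f^{n_k}(y_k))>\epsilon$. This contradiction produces $V$. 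I expect the delicate point to be getting the quantifiers in the hypothesis lined up so that it is applied to the orbit pair $(f^{n_k}(p),f^{n_k}(y_k))$ at the time $-n_k$, which is legitimate since the hypothesis quantifies over all $y\in X$ and all $n\in\mathbb{Z}$. Once $V$ is found, the conclusion $f^n(V)\subset U$ for all $n\in F$ with $F$ syndetic is exactly the definition of $p$ being locally almost periodic, so the proof ends.
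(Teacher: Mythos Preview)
Your contradiction argument misapplies the hypothesis. The hypothesis is anchored at the specific point $p$: it says that for all $y$ and $n$, $d(f^n(p),f^n(y))\leq\delta$ forces $d(p,y)\leq\epsilon$. You try to use it in the form ``$d(f^{-n_k}(a),f^{-n_k}(b))\leq\delta'$ implies $d(a,b)\leq\epsilon'$'' with $a=f^{n_k}(p)$ and $b=f^{n_k}(y_k)$. But substituting $n=-n_k$ in the actual hypothesis requires the premise $d(f^{-n_k}(p),f^{-n_k}(y))\leq\delta'$, whereas what you have is $d(p,y_k)\leq\delta'$; these coincide only if $f^{-n_k}(p)=p$. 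So the step ``we would get $d(a,b)\leq\epsilon'$'' is unjustified, and no contradiction is obtained.

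The deeper issue is the target you set yourself: you are trying to force $f^n(V)\subset U$ for $n\in F$, but the hypothesis only transfers closeness \emph{from} time $n$ \emph{to} time $0$, so it naturally yields $f^{-n}(V)\subset U$ for $n\in F$. Since $-F$ is syndetic, that is enough. This is exactly the paper's (very short, direct) argument: take $W=B[p,\delta/2]$ with $\delta<\epsilon$; for $y\in W$ and $n\in F$ one has $d(y,p)\leq\delta/2$ and $d(p,f^n(p))\leq\delta/2$, hence $d(f^n(p),y)\leq\delta$. Writing $y=f^n(f^{-n}(y))$, the hypothesis gives $d(p,f^{-n}(y))\leq\epsilon$, so $f^{-n}(W)\subset B[p,\epsilon]\subset U$ for all $n\in F$. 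Ironically, you were one step from this when you computed $d(f^n(p),y)\leq\delta$ via the triangle inequality and then dismissed the route as ``circular'': it is not circular once you aim for $-F$ rather than $F$.
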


\begin{proof}
Fix a neighborhood $U$ of $p$.
Take $\epsilon>0$ such that $B[p,\epsilon]\subset U$.
For this $\epsilon$ choose $\delta$ from the hypothesis of the lemma.
We can assume that $\delta<\epsilon$.
Since $p$ is almost periodic,
there is a syndetic $F\subset \mathbb{Z}$
such that $f^n(p)\in B[p,\frac{\delta}{2}]$ for all
$n\in F$.
Define $W=B[p,\frac {\delta}{2}]$.
Then, $W$ is a neighborhood of $p$ contained in $U$
(for $\delta<\epsilon$).
If $y\in W$ and $n\in F$, then
$d(y,p)\leq\frac{\delta}{2}$ and $d(p,f^n(p))\leq\frac{\delta}{2}$. Then,
$d(f^n(p),y)\leq\delta$ so
$d(f^n(p),f^n(f^{-n}(y)))=d(f^n(p),y)\leq\delta$
thus $d(p,f^{-n}(y))\leq\epsilon$ for all $n\in F$.
We conclude that $f^n(W)\subset U$ for every $n\in -F$.
Since $-F$ is syndetic,
we are done.
\end{proof}

Next we present two characterizations of equicontinuous
homeomorphisms the first of which is well known (Theorem 1 in \cite{g}).

\begin{lem}
\label{l3}
A homeomorphism of a compact metric space
is equicontinuous if and only if it is distal and locally almost periodic.
\end{lem}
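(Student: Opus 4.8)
The plan is to prove the two implications separately. The forward one is the easy direction and merely assembles the lemmas already established; the reverse one is the substantive (classical) direction, which is Theorem~1 in \cite{g}.

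\emph{Equicontinuous $\Rightarrow$ distal and locally almost periodic.} Distality follows at once from Lemma~\ref{l1} applied to $K=\{y\}$ for each pair of distinct points $x,y$. For local almost periodicity, fix $p\in X$. I would first check that $p$ is almost periodic: equicontinuity (a hypothesis about all $n\in\mathbb Z$) forces the orbit closure $\overline{\{f^n(p):n\in\mathbb Z\}}$ to be minimal, because whenever $f^{n_i}(p)\to q$, applying $f^{-n_i}$ and equicontinuity yields $f^{-n_i}(q)\to p$, so $p$ lies in the orbit closure of every point of $\overline{\{f^n(p):n\in\mathbb Z\}}$; and a point of a minimal set is almost periodic. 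Now Lemma~\ref{l1} with $x=p$ is exactly the hypothesis of Lemma~\ref{l2}, the conclusion of Lemma~\ref{l2} is exactly the displayed hypothesis of Lemma~\ref{l4}, and since $p$ is almost periodic, Lemma~\ref{l4} gives that $p$ is locally almost periodic. As $p$ was arbitrary, $f$ is locally almost periodic.

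\emph{Distal and locally almost periodic $\Rightarrow$ equicontinuous.} For this direction I would invoke Theorem~1 in \cite{g}. The structural idea behind it is: distality makes every point almost periodic, so $X$ splits into minimal sets; local almost periodicity is inherited by each minimal set $M$ (intersect the neighbourhoods $V\subset U$ from the definition with the invariant set $M$); a minimal distal locally almost periodic system is equicontinuous; and finally one patches these per-minimal-set estimates into a global one, using local almost periodicity at each $p\in X$ to control $f^n(V_p)$ along a syndetic set $F_p$, writing an arbitrary $n$ as $n'+j$ with $n'\in F_p$ and $|j|$ bounded by the gap of $F_p$, handling the finitely many homeomorphisms $f^j$, and then passing to a finite subcover and a Lebesgue number to extract a single $\delta$ valid on all of $X$ for all $n\in\mathbb Z$. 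Together with the forward part this establishes the equivalence.

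The main obstacle is this reverse implication. Local almost periodicity only yields control of $f^n(V)$ along a syndetic set whose gap bound a priori depends on $\epsilon$, so the naive attempt to push the estimate to every $n\in\mathbb Z$ does not obviously close up as $\epsilon\to0$; it is precisely distality, via the decomposition into equicontinuous minimal pieces, that supplies the missing uniformity, and the bookkeeping needed to glue these pieces together uniformly is the delicate point. Since this is standard, I would cite \cite{g} for it and spell out only the forward direction in detail.
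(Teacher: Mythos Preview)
Your proposal is correct. The paper itself gives no proof of this lemma at all: it simply records the statement as ``well known (Theorem~1 in \cite{g})'' and moves on. So for the substantive reverse implication you and the paper do exactly the same thing---defer to Gottschalk. Where you differ is that you supply an explicit argument for the forward implication, assembled from the paper's own Lemmas~\ref{l1}, \ref{l2} and~\ref{l4}; this is more than the paper does and has the pleasant side effect of showing that the forward half already follows from the machinery set up in the section. One small simplification: once you have distality you can quote directly that distal homeomorphisms are pointwise almost periodic (the paper itself invokes this in the proof of Lemma~\ref{lem1}, citing \cite{a}), which spares you the minimality-of-the-orbit-closure argument; your version via equicontinuity and $f^{-n_i}(q)\to p$ is nonetheless valid.
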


The second characterization (stated below) is
related to Theorem 3.4 in \cite{loyz} where it is proved
that a homeomorphism is equicontinuous if and only if
the induced map in the hyperspace is distal.

\begin{lem}
\label{lem1}
A homeomorphism $f: X\to X$ of a compact metric space $X$ is equicontinuous if and only if 
$\inf_{n\in\mathbb{Z}} dist(f^n(x),f^n(K))>0$ for all $x\in X$ and all nonempty compact subset $K$ of $X$
with $x\notin K$.
\end{lem}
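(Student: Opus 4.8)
The plan is to prove the two implications of the equivalence separately. The forward implication (equicontinuity $\Rightarrow$ the distance condition) requires no new argument: it is exactly the statement of Lemma \ref{l1}, which has already been proved. So all the work lies in the converse.

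For the converse, assume that $\inf_{n\in\mathbb{Z}} dist(f^n(x),f^n(K))>0$ for every $x\in X$ and every nonempty compact $K\subset X$ with $x\notin K$. I would deduce equicontinuity from Lemma \ref{l3} by checking that $f$ is distal and locally almost periodic. Distality is immediate: given distinct $x,y\in X$, the singleton $K=\{y\}$ is a nonempty compact set with $x\notin K$, so $\inf_{n\in\mathbb{Z}} d(f^n(x),f^n(y))=\inf_{n\in\mathbb{Z}} dist(f^n(x),f^n(\{y\}))>0$.

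It remains to prove that $f$ is locally almost periodic, i.e.\ that every $p\in X$ is locally almost periodic, and for this I would verify the hypotheses of Lemma \ref{l4} at an arbitrary $p$. The second hypothesis of that lemma holds by applying the standing assumption at the point $p$ and feeding the resulting property into Lemma \ref{l2}: for every $\epsilon>0$ there is $\delta>0$ such that $y\in X$, $n\in\mathbb{Z}$ and $d(f^n(p),f^n(y))\leq\delta$ force $d(p,y)\leq\epsilon$. The first hypothesis — that $p$ is almost periodic — follows from the distality established above together with the classical fact that a distal homeomorphism of a compact metric space is pointwise almost periodic (see, e.g., \cite{a}). Lemma \ref{l4} then yields that $p$ is locally almost periodic; since $p$ was arbitrary, $f$ is locally almost periodic, and Lemma \ref{l3} concludes the proof.

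The proof is thus essentially an assembly of Lemmas \ref{l1}, \ref{l2}, \ref{l4} and \ref{l3}, and the one place that relies on material outside the excerpt is the step "distal $\Rightarrow$ pointwise almost periodic". That is the point I would expect a reader to scrutinize; if one insisted on a fully self-contained treatment, the genuine obstacle would be to extract almost periodicity of each point directly from the distance hypothesis, whereas here the cleanest option is simply to invoke the classical result.
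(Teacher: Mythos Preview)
Your proof is correct and follows essentially the same approach as the paper: both directions are handled identically, with the forward implication by Lemma~\ref{l1} and the converse by showing distality (via singletons), invoking the classical fact that distal implies pointwise almost periodic (cited from \cite{a}), applying Lemma~\ref{l2} to feed Lemma~\ref{l4}, and concluding with Lemma~\ref{l3}. Your identification of the one external input---distal $\Rightarrow$ pointwise almost periodic---matches exactly the step the paper also imports from \cite{a}.
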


\begin{proof}
The necessity follows
from Lemma \ref{l1}.
For the sufficiency,
assume that $\inf_{n\in\mathbb{Z}} dist(f^n(x),f^n(K))>0$ for all $x\in X$ and all compact nonempty subset $K$
with $x\notin K$.
Then, $f$ is distal
and so pointwise almost periodic (e.g. Corollary 4 p. 68 in \cite{a}).
Moreover, by Lemma \ref{l2}, for every
$p\in X$ and every $\epsilon>0$ there is
$\delta>0$ such that
$$
y\in X, n\in\mathbb{Z} \mbox{ and }d(f^n(p),f^n(y))\leq\delta\quad \Longrightarrow \quad d(p,y)\leq\epsilon.
$$
Then, Lemma \ref{l4} implies that
$f$ is locally almost periodic.
Since $f$ is distal, $f$ is equicontinuous by Lemma \ref{l3}.
This completes the proof.
\end{proof}

The last ingredient stated below gives the link between (CW) and $X^{slc}$.

\begin{lem}
\label{cw}
Let $f:X\to X$ be a homeomorphism satisfying (CW)
of a compact metric space $X$.
Then, $\inf_{n\in\mathbb{Z}}dist(f^n(p), f^n(K))>0$
for every $p\in X^{slc}$ and every compact subset $K$ of $X$ with $p\notin K$.
\end{lem}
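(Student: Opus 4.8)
The plan is to argue by contradiction, assuming that for some $p\in X^{slc}$ there is a compact $K\subset X$ with $p\notin K$ and $\inf_{n\in\mathbb{Z}}dist(f^n(p),f^n(K))=0$. The idea is to manufacture, from such a $K$, a \emph{subcontinuum} $C$ with $p\notin C$ that still witnesses $\inf_{n}dist(f^n(p),f^n(C))=0$, which contradicts (CW). The semi-local connectedness at $p$ is exactly what allows this replacement of an arbitrary compact set by a connected one.

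First I would set $\epsilon=\tfrac{1}{3}dist(p,K)>0$, so $K\subset X\setminus B(p,2\epsilon)$, and use semi-local connectedness of $X$ at $p$: for the neighbourhood $U=B(p,2\epsilon)$ choose an open $V\subset U$ containing $p$ such that $X\setminus U$ lies in the union of finitely many connected components $C_1,\dots,C_m$ of $X\setminus V$. Each $C_j$ is a subcontinuum of $X$ (a connected component of the compact set $X\setminus V$ is compact and connected), and none of them contains $p$ since $p\in V$. Now from the hypothesis pick sequences $b_k\in K$ and $n_k\in\mathbb{Z}$ with $d(f^{n_k}(p),f^{n_k}(b_k))\to 0$. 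Each $b_k\in K\subset X\setminus U$, hence $b_k$ lies in some $C_{j(k)}$; by the pigeonhole principle one index $j$ recurs infinitely often, so along a subsequence $b_k\in C_j$ for all $k$. Then $dist(f^{n_k}(p),f^{n_k}(C_j))\le d(f^{n_k}(p),f^{n_k}(b_k))\to 0$, so $\inf_{n\in\mathbb{Z}}dist(f^n(p),f^n(C_j))=0$ with $C_j$ a subcontinuum not containing $p$. This contradicts (CW), finishing the proof.

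The main subtlety, and the step I would be most careful about, is verifying that the pieces $C_1,\dots,C_m$ supplied by the definition of semi-local connectedness are genuinely subcontinua: the definition says $X\setminus U$ is covered by finitely many connected \emph{components} of $X\setminus V$, and one must note that $X\setminus V$ is closed (hence compact) in the compact space $X$, so its connected components are closed, hence compact, and connected and nonempty — i.e. subcontinua — and each is disjoint from $V\ni p$. Everything else is the contradiction-with-pigeonhole packaging, which is routine; no dynamics beyond applying (CW) to $C_j$ is needed here, since the serious dynamical work has already been isolated in Lemmas \ref{l2}, \ref{l4}, and \ref{l3}.
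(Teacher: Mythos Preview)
Your proof is correct and follows essentially the same contradiction-plus-pigeonhole strategy as the paper: use semi-local connectedness at $p$ to trap the sequence $b_k$ inside one of finitely many subcontinua $C_j$ of $X\setminus V$, then invoke (CW) for that $C_j$. Your version is in fact slightly cleaner than the paper's, which first covers $K$ by finitely many sets $X\setminus Cl(U_{z_i})$ and pigeonholes once before applying semi-local connectedness; you bypass that step by observing directly that $K\subset X\setminus B(p,2\epsilon)$ for a single ball $U=B(p,2\epsilon)$, so only one application of the definition and one pigeonhole are needed.
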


\begin{proof}
Suppose by contradiction that
$\inf_{n\in\mathbb{Z}}dist(f^n(p),f^n(K))=0$ for some $p\in X^{slc}$ and some compact subset $K$ of $X$ with $p\notin K$.
Then, there are sequences $n_k\in \mathbb{Z}$ and $y_k\in K$ such that
\begin{equation}
\label{eq1}
\lim_{k\to\infty}d(f^{n_k}(p),f^{n_k}(y_k))=0.
\end{equation}

Now fix $z\in K$.
Since $p\notin K$, there is an open neighborhood $U_z$ of $p$
such that $z\in X\setminus Cl(U_z)$ where $Cl(\cdot)$ denotes the closure operation.
Then, $\{X\setminus Cl(U_z):z\in K\}$ is an open covering of $K$ which is compact so
there are finitely many
points $z_1,\cdots, z_r\in K$ such that
$$
K\subset \bigcup_{i=1}^r(X\setminus Cl(U_{z_i})).
$$
Since $y_k\in K$ for $k\in \mathbb{N}$, we can assume by passing to a subsequence
if necessary that there is a fixed index $1\leq i\leq r$ such that
$y_k\in X\setminus Cl(U_{z_i})$ for all
$k\in \mathbb{N}$.

But $p\in X^{slc}$. Then,
there exist
an open neighborhood $V$ of $p$ and finitely many
connected components $C_1,\cdots, C_l$ of $X\setminus V$
such that
$$
X\setminus U_{z_i}\subset \bigcup_{j=1}^lC_j.
$$
Since $y_k\in X\setminus Cl(U_{z_i})\subset X\setminus U_{z_i}$ for all $k\in \mathbb{N}$,
we can assume again up to passing to a subsequence if necessary that there is another
fixed index $1\leq j\leq l$ such that $y_k\in C_j$
for all $k\in\mathbb{N}$.

On the one hand, $C_j$ is a subcontinuum of $X$ with $p\not\in C_j$ hence (CW) implies
$$
\inf_{n\in \mathbb{Z}}dist(f^n(p),f^n(C_{j}))>0.
$$
But, on the other hand,
$$
\inf_{n\in \mathbb{Z}}dist(f^n(p),f^n(C_{j})\leq \inf_{k\in\mathbb{N}}d(f^{n_k}(p),f^{n_k}(y_k))
$$
so (\ref{eq1}) implies
$$
\inf_{n\in \mathbb{Z}}dist(f^n(p),f^n(C_{j})=0
$$
which is absurd. This completes the proof.
\end{proof}

Now we can prove our results.

\begin{proof}[Proof of Theorem \ref{thB}]
Let $f: X\to X$ be a homeomorphism of a semi-locally connected compact metric space $X$, i.e.,
$X=X^{slc}$.
If $f$ is equicontinuous, then $f$ satisfies (CW) by Lemma \ref{l1}.
Conversely, if $f$ satisfies (CW), then
Lemma \ref{cw} and $X=X^{lc}$ imply
$\inf_{n\in\mathbb{Z}}dist(f^n(p), f^n(K))>0$ for every $x\in X$ and every compact subset $K$ of $X$ with $x\notin K$.
Then, $f$ is equicontinuous by Lemma \ref{lem1}. 
\end{proof}

\begin{proof}[Proof of Theorem \ref{thD}]
Let $f: X\to X$ be a homeomorphism satisfying (CW) of a compact metric space $X$.
If $p\in X^{slc}$, then
$\inf_{n\in\mathbb{Z}}dist(f^n(p), f^n(K))>0$ for every compact subset $K$ of $X$ with $p\notin K$
by Lemma \ref{cw}.
From this and Lemma \ref{l2} we obtain that
for every $\epsilon>0$ there is
$\delta>0$ such that
$$
y\in X, n\in\mathbb{Z} \mbox{ and }d(f^n(p),f^n(y))\leq\delta\quad \Longrightarrow \quad d(p,y)\leq\epsilon.
$$
To finish we prove that this property implies that $p$ is almost authomorphic.
Indeed, take $y\in X$ and a sequence $n_i\in \mathbb{Z}$ such that $f^{n_i}(p)\to y$.
Fix $\epsilon$ and let $\delta$ be given by the above property.
Since $f^{n_i}(p)\to y$, there is $i_0>0$ such that
$d(f^{n_i}(p),y)\leq \delta$ for all $i\geq i_0$.
It follows that $d(f^{n_i}(p),f^{n_i}(f^{-n_i}(y)))=d(f^{n_i}(p),y)\leq \delta$
and so $d(p,f^{-n_i}(y))\leq\epsilon$ for all $i\geq i_0$.
Hence $f^{-n_i}(y)\to p$ and the proof follows.
\end{proof}

\end{document}